\numberwithin{equation}{section}
\newtheorem{theorem}{Theorem}[section]
\newtheorem{lemma}[theorem]{Lemma}
\newtheorem{corollary}[theorem]{Corollary}
\theoremstyle{definition}
\newtheorem{definition}[theorem]{Defintion}
\DeclareRobustCommand{\&}{%
  \ifdim\fontdimen1\font>0pt
    \textsl{\symbol{`\&}}%
  \else
    \symbol{`\&}%
  \fi
}
\DeclarePairedDelimiter{\ceil}{\lceil}{\rceil}
\DeclarePairedDelimiter{\floor}{\lfloor}{\rfloor}
\begin{document}

\begin{center}
{\Large \bf ON THE EXISTENCE OF HAMILTONIAN CYCLES IN HYPERCUBES}
\vspace{8mm}

{\large {\bf Gabriele Di Pietro $^{\orcidlink{0009-0000-9276-1269}}$} and {\bf Marco Rip\`a$^{\orcidlink{0000-0002-6036-5541}}$}}
\end{center}
\vspace{14mm}

\begin{adjustwidth}{1.6 cm}{1.6 cm}
{\bf Abstract.} Building on the results of our previous work on Euclidean leaper tours, considering all integers $k>1$ and $h>0$, we study the existence of Hamiltonian cycles in the vertex set $C(2,k):=\{0,1\}^k$ of the $k$-dimensional hypercube when the Euclidean distance between consecutive vertices is fixed. Since the distance between two vertices of $C(2,k)$ is $\sqrt{h}$ for some integer $h$, the problem amounts to determining for which integers $k$ and $h$ there exists a Hamiltonian cycle whose associated Euclidean distance is $\sqrt{h}$. In this paper, we prove that such cycles exist if and only if $h$ is odd and $1 \leq h \leq k-1$. As a result, for all integers $a \geq 0$, $b \geq a$ with $b>0$, we provide a necessary and sufficient condition for the existence of closed Euclidean $(a,b)$-leaper tours on $2 \times 2 \times \cdots \times 2$ chessboards, where the associated distance equals $\sqrt{a^2+b^2}$.

\vspace{6mm}
\end{adjustwidth}

\section{Introduction} \label{sec:1}
In recent years, the classical Knight's Tour Problem (KTP) \cite{3, 7} has been revised and extended to a wide set of similar pieces belonging to fairy chess \cite{1, 4}. This easily follows by defining the knight as the piece that performs jumps of fixed Euclidean length $\sqrt{1^2+2^2}$.

For clarity, we recall that the expression \emph{Euclidean tour}, first introduced in~\cite{5} 
and later used in~\cite{8}, denotes any Hamiltonian path whose consecutive vertices lie at the same Euclidean distance. A closed Euclidean tour that returns to its starting vertex after $2^k$ moves is, therefore, called a \emph{Hamiltonian cycle}.

Now, let $\mathbb{N} \coloneqq \{1,2,3,\ldots\}$ and $\mathbb{N}_0 \coloneqq \{0,1,2,\ldots\}$. Consequently, given $a \in \mathbb{N}_0$ and $b \in \mathbb{N}$ such that $a \leq b$, the stated assumption allows us to denote the knight as a $(1,2)$-leaper and, consequently, there are infinitely many fairy chess leapers (see Table~1).

\begin{table}[H]
    \centering
    \begin{tabular}{|c|c|c|c|c||c|c}
\hline
 \backslashbox{$\boldsymbol b$}{$\boldsymbol a $} & $\boldsymbol 0$ & $\boldsymbol 1$ & $\boldsymbol 2$ & $\boldsymbol 3$ & $\boldsymbol 4$ & \dots \\
\hline 
$\boldsymbol 1$ & Wazir (W) & Ferz (F)  & --  & --  & --  &  \\
\hline
$\boldsymbol 2$ & Dabbaba (D) & Knight (N) & Alfil (A) & --  & --  &  \\
\hline 
$\boldsymbol 3$ & Threeleaper (H) & Camel (C) & Zebra (Z) & Tripper (G)  & --  &  \\
\hline
\hline
$\boldsymbol 4$ & Fourleaper & Giraffe & Stag & Antelope & Commuter &  \\
\hline
\vdots &  &  &  &  &  &   \\

\end{tabular}
    \caption{List of $(a,b)$-leapers in fairy chess such that $a \geq 0$, $b \geq 1$, and $a \leq b$.}
    \label{Tab1}
\end{table}

By focusing our research on the $k$-dimensional $2 \times 2 \times \cdots \times 2$ chessboards, we have recently investigated some fairy chess leapers, providing sufficient conditions for the existence of closed tours (e.g., Theorem~3.3 of \cite{8} states that a closed Euclidean $(2,3)$-leaper tour is possible as long as $k \geq 15$).

Let $C(2,k) \coloneqq \{(x_1,x_2,\dots,x_k) \mid x_i \in \{0,1\},\, 1 \leq i \leq k \}$ denote the vertex set of the $k$-dimensional hypercube. This set consists of $2^k$ 
distinct vertices, each corresponding to a binary $k$-tuple.

More generally, let $C(n,k) \coloneqq \{0,1,\ldots,n-1\}^k$.

\begin{definition} \label{hamiltonian}
A Hamiltonian cycle in $C(2,k)$ (i.e., in the grid $\{0,1\}^k$) is a closed Euclidean tour in $C(2,k)$ that includes the $2^k$-th move to return to the starting vertex.
\end{definition}

In the present paper, we extend the previous result by providing a necessary and sufficient condition via two theorems. The first focuses on the existence of Hamiltonian cycles in the $k$-dimensional grid $C(2,k) \coloneqq \{0,1\}^k$, for any associated Euclidean distance. The second is more specific, as it provides a necessary and sufficient condition for the existence of closed Euclidean $(a,b)$-leaper tours in $2 \times 2 \times \cdots \times 2$ chessboards, with associated distance $\sqrt{a^2+b^2}$.
In fact, for any given $(a,b)$-leaper with nonnegative $a$, positive $b$, and $a \leq b$ (to ensure uniqueness), we consider the Euclidean distance $\sqrt{a^2+b^2}$ associated with the only leaper whose move takes one vertex of the $k$-cube to another at exact Euclidean distance $\sqrt{a^2+b^2}$. Consequently, every fairy chess leaper is well defined (e.g., the condition $a \leq b$ implies that the threeleaper corresponds to $(0,3)$, not $(3,0)$).

Consider two vertices of $C(2,k)$, $v_i \coloneqq (x_1,x_2,\dots,x_k)$ and $v_j \coloneqq (y_1,y_2,\dots,y_k)$, where $x_1,\dots,x_k \in \{0,1\}$ and $y_1,\dots,y_k \in \{0,1\}$.

The Euclidean distance between $v_i$ and $v_j$, $\| v_i-v_j \|: C(2,k) \rightarrow \mathbb{R}$, is
\begin{equation} \label{norm}
\| v_i-v_j \| \coloneqq \sqrt{(x_1-y_1)^2+(x_2-y_2)^2+\cdots+(x_k-y_k)^2}.
\end{equation}

Given $C(2,k)$, any $(a,b)$-leaper moving from $v_i$ to $v_j$ (and vice versa) implies that $\| v_i-v_j \|$ equals $\sqrt{a^2+b^2}$. For example, for $k=5$, consider the vertices $v_1=(0,0,0,0,0)$ and $v_2=(1,1,1,1,1)$. Then the $(1,2)$-leaper, called the Knight, is allowed to move between $v_1$ and $v_2$, since
\begin{equation} 
\| v_2-v_1 \| = \sqrt{(1-0)^2+(1-0)^2+(1-0)^2+(1-0)^2+(1-0)^2}=\sqrt{1^2+2^2}=\sqrt{5}.
\end{equation}

Thus, a closed Euclidean tour in $C(2,k)$ is a sequence of $2^k$ distinct vertices of $C(2,k)$ such that the distance between each pair of consecutive elements is always the same, and where also the distance between the ending vertex and the starting one is equal to this fixed value.

If the distance $\sqrt{a^2+b^2}$ is given, for the construction of a closed Euclidean tour in $C(2,k)$, we need to find a sequence, let us denote it by $P^{a^2+b^2}_\textnormal{C}(2,k) \coloneqq v_1 \rightarrow v_2 \rightarrow \dots \rightarrow v_{2^k}$, such that $\|v_i-v_{i+1}\|=\sqrt{a^2+b^2}$ is true for each integer $i \in \{1,2^k-1\}$ and which satisfies the ``closed tour'' condition $\| v_{2^k}-v_1\|=\sqrt{a^2+b^2}$.

The notation $P^{a^2+b^2}_\textnormal{C}(2,k)$ is coherent with Definition~1.2 of \cite{8}, the only difference is that now the apex (i.e., $a^2+b^2$) describes the squared Euclidean distance associated with the Hamiltonian cycle (and no longer the selected fairy chess leaper) while the subscript $\textnormal{C}$ still indicates a closed Euclidean tour. Clearly, $(2,k)$ is the argument that arises from the $k$-dimensional hypercubes we are considering in this paper (i.e., the grids of the form $C(2,k)$). 

For each grid $C(2,k)$, let us note that the distance between any two of its vertices is the square root of the number of coordinates switched between them (e.g., let $v_1, v_2 \in C(2,k)$ be such that $v_1 \equiv (0,1,1,0,1)$ and $v_2 \equiv (0,1,0,1,0)$, it follows that their Euclidean distance is $\|v_1-v_2\|=\sqrt{0^2+0^2+1^2+1^2+1^2}=\sqrt{3}$).

Then, since we are only considering grids of the form $\{0,1\}^k$, we can specify any movement rule by counting how many times a \emph{change of coordinates} (i.e., $0\mapsto1$ or $1\mapsto0$, as appropriate) occurs when moving from one vertex of $C(2,k)$ to the next.
\begin{definition} \label{change}
Let $k \in \mathbb{N}$ and let $h$ be a nonnegative integer such that $h \leq k$ and assume that both $v_i$ and $v_j$ belong to $\{0,1\}^k$. Then, we define the transformation $v_i \rightarrow v_j$ performed by changing exactly $h$ coordinates of $v_i$ as ``change $h$'' so that $\| v_i - v_j \| = \sqrt{h}$.
\end{definition}

The change-$1$ case originates the \emph{reflected binary code} (RBC) \cite{9}, which has been indirectly used in \cite{8}, where we proved the existence of wazir's tours in $C(2,k)$ for each positive integer $k$.

For example, given the polygonal chain $P^1_\textnormal{C}(2,2) \coloneqq (0,0) \rightarrow (1,0) \rightarrow (1,1) \rightarrow (0,1)$ that joins all the vertices of $C(2,2)$, it is possible to find a sequence at change $1$ in $C(2,3)$ by proceeding as follows:
\begin{enumerate}
    \item We add a new coordinate on the right-hand side of each term to construct the two polygonal chains $S_1 \coloneqq (0,0,0) \rightarrow (1,0,0) \rightarrow (1,1,0) \rightarrow (0,1,0)$ and $S_2 \coloneqq (0,0,1) \rightarrow (1,0,1) \rightarrow (1,1,1) \rightarrow (0,1,1)$ belonging to $\mathbb{R}^3$.
    \item We invert $S_2$ to get the polygonal chain $\hat{S_2} \coloneqq (0,1,1) \rightarrow (1,1,1) \rightarrow (1,0,1) \rightarrow (0,0,1) \in \{0,1\}^3$.
    \item We join, one after the other, $S_1$ and $\hat{S_2}$ to obtain $P^1_\textnormal{C}(2,3) \coloneqq (0,0,0) \rightarrow (1,0,0) \rightarrow (1,1,0) \rightarrow (0,1,0) \rightarrow (0,1,1) \rightarrow (1,1,1) \rightarrow (1,0,1) \rightarrow (0,0,1)$, which is a closed Euclidean tour for $C(2,3)$ as the distance between its starting and final vertex is $1$.
\end{enumerate}

Therefore, we have constructively shown the existence of change $1$ Euclidean tours for all the grids of the form $C(2,k)$ (i.e., it is possible to join all the vertices of each $k$-dimensional hypercube with a polygonal chain whose line segments have a unit Euclidean length and such that we can add one more edge of equal length to finally get a closed polygonal chain \cite{6, 8}).

\section{Existence of Hamiltonian cycles} \label{sec:2}
Several authors have investigated Hamiltonian properties of hypercubes under various constraints (see, for instance, \cite{2}), but their approach remains purely combinatorial and does not involve Euclidean distances between vertices.

\sloppy The nonexistence of Hamiltonian cycles depends on the concept of parity.

In Subsection~4.1 of~\cite{4} the definition of even and odd vertices is introduced in the following way: assuming $m~\in~\mathbb{N}_0$, for any given $k$-tuple $(x_1, x_2, \dots, x_k)$ of nonnegative integers smaller than $n$, $(x_1, x_2, \dots, x_k)$ is an \emph{even} vertex of $\{0, 1, \ldots, n-1\}^k$ if and only if
\begin{equation}
    \sum_{j=1}^k x_j = 2 m,
\end{equation} 
otherwise we say that $(x_1, x_2, \dots, x_k)$ is \emph{odd}.

The following theorem adopts the distinction between odd and even vertices to show that, for each pair of integers $(n,k)$ with $n,k > 1$, there are infinitely many $(a,b)$-leapers that cannot perform any Hamiltonian cycle in $\{0, 1, \ldots, n-1\}^k$.
Here, as usual, we assume that $a \in \mathbb{N}_0$, $b \in \mathbb{N}$, and $a \leq b$.
\begin{theorem} \label{parity}
\textnormal{\bf{(From Theorem~2.1 in \cite{8}, Section~2).}} Let $n,k \in \mathbb{N} \setminus \{1\}$ so that the $k$-dimensional grid $C(n,k)$ is given. Then, consider any $(a,b)$-leaper such that $a+b$ is even. If such a leaper is originally placed on an even starting vertex, it can only visit (some of) the $\ceil[\Big]{\frac{n^{k}}{2}}$ even vertices of $C(n,k)$, whereas if the $(a,b)$-leaper starts from an odd vertex, it can only visit (some of) the $\floor[\Big]{\frac{n^{k}}{2}}$ odd vertices.
\end{theorem}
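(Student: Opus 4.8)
The plan is to isolate a parity invariant respected by every leaper move and then to count the two parity classes of vertices of $C(n,k)$ directly.

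First I would record the decisive observation: a single move of an $(a,b)$-leaper sends a vertex $v=(x_1,\dots,x_k)$ to a vertex $v'=v+d$ with $d=(d_1,\dots,d_k)\in\mathbb{Z}^k$ and $\sum_{i=1}^{k} d_i^2 = a^2+b^2$ --- this is nothing more than the requirement that consecutive vertices of the tour lie at Euclidean distance $\sqrt{a^2+b^2}$, and it holds irrespective of how many coordinates the move actually alters, so no separate argument is needed for $a=0$. Since $t\equiv t^2\pmod 2$ for every integer $t$, summing over $i$ gives
\[
\sum_{i=1}^{k} d_i \;\equiv\; \sum_{i=1}^{k} d_i^{2} \;=\; a^{2}+b^{2} \;\equiv\; a+b \pmod 2 .
\]
Hence, when $a+b$ is even, $\sum_i d_i$ is even, so $\sum_i x_i$ and $\sum_i x'_i$ share the same parity: every move preserves the even/odd type of the current vertex in the sense recalled above. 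By induction on the number of moves, a leaper started on an even vertex visits only even vertices, and one started on an odd vertex visits only odd vertices; in particular it can reach at most all the vertices of its own parity class.

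It remains to count those classes. Let $N_{\mathrm e}$ and $N_{\mathrm o}$ be the numbers of even and odd vertices of $C(n,k)$. Using $\tfrac12\bigl(1+(-1)^{s}\bigr)$ as the indicator of ``$s$ even'' and factorizing the resulting sum over the $k$ coordinates,
\[
N_{\mathrm e} \;=\; \sum_{(x_1,\dots,x_k)\in C(n,k)} \frac{1+(-1)^{x_1+\cdots+x_k}}{2} \;=\; \frac12\left( n^{k} + \left(\sum_{x=0}^{n-1}(-1)^{x}\right)^{\!k}\,\right),
\]
and likewise $N_{\mathrm o}=\tfrac12\bigl(n^{k}-\bigl(\sum_{x=0}^{n-1}(-1)^{x}\bigr)^{k}\bigr)$. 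Since $\sum_{x=0}^{n-1}(-1)^{x}$ equals $0$ when $n$ is even and $1$ when $n$ is odd, a short case split yields $N_{\mathrm e}=N_{\mathrm o}=n^{k}/2$ for $n$ even and $N_{\mathrm e}=(n^{k}+1)/2$, $N_{\mathrm o}=(n^{k}-1)/2$ for $n$ odd; in every case $N_{\mathrm e}=\ceil*{\tfrac{n^{k}}{2}}$ and $N_{\mathrm o}=\floor*{\tfrac{n^{k}}{2}}$. Combined with the invariance from the previous paragraph, this gives the statement.

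I do not expect a genuine obstacle here: the entire conceptual content is the one-line congruence $\sum_i d_i\equiv a^{2}+b^{2}\equiv a+b\pmod 2$, and everything else is bookkeeping. The two points that deserve a little care are (i) phrasing the ``move condition'' so that it applies uniformly to every leaper with $a+b$ even --- including $a=0$ and including moves that alter more than two coordinates when $a^{2}+b^{2}$ admits several representations as a sum of squares --- and (ii) the case distinction on the parity of $n$ needed to identify $N_{\mathrm e}$ and $N_{\mathrm o}$ with $\ceil*{\tfrac{n^{k}}{2}}$ and $\floor*{\tfrac{n^{k}}{2}}$.
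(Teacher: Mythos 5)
Your proof is correct and rests on the same parity-invariance idea the paper relies on: the paper does not actually reprove this theorem but imports it from Reference~\cite{8}, and the argument it alludes to is exactly yours --- a move of squared Euclidean length $a^2+b^2$ with $a+b$ even shifts the coordinate sum by an even amount (via $d_i \equiv d_i^2 \pmod 2$), so the even/odd class of the occupied vertex is invariant. Your explicit count of the two parity classes via the indicator $\tfrac{1}{2}\bigl(1+(-1)^{s}\bigr)$, and your remark that the congruence covers every representation of $a^2+b^2$ as a sum of squares (so no special case for $a=0$), are details the paper leaves implicit but are handled correctly here.
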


In the notation of this paper, for our purposes, Theorem~\ref{parity} can be improved as Lemma~\ref{parity2}.
\begin{lemma} \label{parity2}
For each even positive integer $h$ and each integer $k > h$, there are no Hamiltonian cycles in $C(2,k)$ whose associated Euclidean distance is $\sqrt{h}$.
\end{lemma}
\begin{proof}
In accordance with the notation adopted in~\cite{8}, let $d_1, d_2, \ldots, d_h$ denote the coordinates that are equal to $1$ in a given vertex of $C(2,k)$ (the remaining $k-h$ coordinates are equal to $0$).
Since the Euclidean distance $\sqrt{h}$ implies a change $h$, the statement of Lemma~\ref{parity2} trivially follows from the proof of Theorem~2.1 in \cite{8}. In fact, if the starting vertex is even, as we change $h$ of its $k$ coordinates, we get an even vertex. \linebreak On the other hand, when we start from an odd vertex, the change $h$ results in another odd vertex.
\end{proof}

For any given integer $k>1$, the condition $h<k$ immediately follows from the fact that the maximum Euclidean distance between every two elements of $\{0,1\}^k$ equals $\sqrt{k}$.

Now, let us call the \emph{opposite corner} of a vertex $(x_1, \dots, x_k)$ in $C(2,k)$ the unique vertex at Euclidean distance $\sqrt{k}$ from it, namely the one identified by the $k$-tuple $(1-x_1, \dots, 1-x_k)$. Observe that each vertex of $C(2,k)$ has exactly one opposite corner.

Consequently, we can state the nonexistence lemma below.
\begin{lemma} \label{noexistence}
For each pair of positive integers $(k, h)$ with $h \geq k > 1$, there are no Hamiltonian cycles in $C(2,k)$ whose associated Euclidean distance is $\sqrt{h}$.
\end{lemma}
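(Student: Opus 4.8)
The plan is to reduce the claim to two easy subcases, $h > k$ and $h = k$, which together exhaust all pairs with $h \geq k > 1$.

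First I would record the elementary fact, already used in the discussion above, that for any two vertices of $C(2,k)$ the square of their Euclidean distance equals the number of coordinates in which they differ, hence is an integer in $\{0,1,\dots,k\}$. Consequently, if $h > k$ there is no pair of vertices of $C(2,k)$ at distance $\sqrt h$ at all, so no sequence $P^{h}_C(2,k)$ with consecutive steps of length $\sqrt h$ can exist, and the lemma is vacuously true in that range.

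The remaining case $h = k$ is the substantive one, though still short. A step of squared length $k$ must flip all $k$ coordinates, so from any vertex $V = (x_1,\dots,x_k)$ the only vertex at distance $\sqrt k$ is its antipode $\overline V \coloneqq (1-x_1,\dots,1-x_k)$. Hence in any candidate sequence $V_1 \to V_2 \to V_3 \to \cdots$ with every step of length $\sqrt k$ we are forced to take $V_2 = \overline{V_1}$ and then $V_3 = \overline{V_2} = V_1$, so no three consecutive vertices of such a sequence can be distinct. Since $k > 1$ gives $2^k \geq 4 > 2$, a closed tour visiting all $2^k$ distinct vertices of $C(2,k)$ cannot exist, and a fortiori there is no Hamiltonian cycle in the sense of Definition~\ref{hamiltonian}. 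Equivalently, the graph on vertex set $C(2,k)$ joining pairs at distance $\sqrt k$ is a perfect matching --- a disjoint union of $2^{k-1}$ independent edges --- which contains no cycle whatsoever as soon as $2^k > 2$.

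I do not anticipate a genuine obstacle here; the one point that deserves to be stated carefully is the bookkeeping separating $h > k$ (where no admissible step exists) from $h = k$ (where admissible steps exist but only as a matching), together with an explicit appeal to Definition~\ref{hamiltonian} so that ``Hamiltonian cycle'' is read as passing through all $2^k$ distinct vertices of the hypercube.
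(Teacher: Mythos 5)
Your proposal is correct and follows essentially the same route as the paper: the case $h>k$ is dismissed because no two vertices of $C(2,k)$ can differ in more than $k$ coordinates, and the case $h=k$ is handled by observing that each vertex has a unique antipode, so no more than two vertices can be joined and no cycle through all $2^k>2$ vertices exists. Your reformulation of the $h=k$ case as the distance-$\sqrt{k}$ graph being a perfect matching is a slightly cleaner way of saying what the paper establishes via equations (\ref{rule}) and (\ref{rule2}), but the substance is identical.
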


\begin{proof}

Since each element of $C(2,k)$ is uniquely defined by a $k$-tuple of $0$'s and/or $1$'s, the Euclidean distance between any two vertices of $C(2,k)$ belongs to the set $\mathrm{D} = \{\sqrt{k-k}, \sqrt{k-(k-1)}, \ldots, \sqrt{k-(k-k)}\}$, whose maximum is $\sqrt{k}$.

By hypothesis, $\sqrt{h} \geq \sqrt{k} > 1$. If $\sqrt{h} > \sqrt{k}$, then the associated Euclidean distance $\sqrt{h}$ does not belong to the set $\mathrm{D}$. Otherwise, $\sqrt{h}=\sqrt{k}$, and we can only move from any given vertex of $C(2,k)$, say $(x_1, \dots, x_k)$, to its opposite corner (i.e., the one characterized by the same coordinates, swapped according to the transformation $(x_1, \dots, x_k) \mapsto (1-x_1, \dots, 1-x_k)$), since the Euclidean distance between these two vertices is exactly $\sqrt{k}$.

Thus, only the mentioned pair of vertices can be covered, leaving out the remaining $2^k-2$ vertices of $C(2,k)$.
\end{proof}

We are finally ready to state the existence Theorem~\ref{existence}.
\begin{theorem} \label{existence}
For each integer $k > 1$ and each odd integer $h \in [1, k-1]$, there exists a Hamiltonian cycle in $C(2,k)$ whose associated Euclidean distance is $\sqrt{h}$.
\end{theorem}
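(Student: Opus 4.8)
The plan is to reduce the existence of a change-$h$ Hamiltonian cycle to the change-$1$ case already settled constructively in Section~\ref{sec:1}, by transporting the reflected binary code through a suitable linear change of coordinates. Throughout I would regard $\{0,1\}^k$ as the vector space $(\mathbb{Z}/2\mathbb{Z})^k$ with coordinatewise addition modulo $2$, written $\oplus$; for $u,v\in\{0,1\}^k$ the number of coordinates in which they differ equals the Hamming weight $\mathrm{wt}(u\oplus v)$, so $\|u-v\|=\sqrt{\mathrm{wt}(u\oplus v)}$ by Definition~\ref{change}. The key observation is that, given any invertible linear map $\varphi\colon(\mathbb{Z}/2\mathbb{Z})^k\to(\mathbb{Z}/2\mathbb{Z})^k$ whose images $\varphi(e_1),\dots,\varphi(e_k)$ of the standard basis vectors all have weight $h$, one can take a change-$1$ Hamiltonian cycle $V_1\to V_2\to\cdots\to V_{2^k}$ in $C(2,k)$ (available by the RBC construction recalled in Section~\ref{sec:1}) and apply $\varphi$ to every vertex: the resulting sequence $\varphi(V_1)\to\varphi(V_2)\to\cdots\to\varphi(V_{2^k})$ still visits every vertex of $C(2,k)$ exactly once (as $\varphi$ is a bijection of $\{0,1\}^k$), and for every index $t$ (read modulo $2^k$) the consecutive difference is $\varphi(V_t)\oplus\varphi(V_{t+1})=\varphi(V_t\oplus V_{t+1})$, which has weight $h$ because $V_t\oplus V_{t+1}$ has weight $1$, i.e.\ it equals some $e_{i_t}$. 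Thus every consecutive pair, the closing one $\varphi(V_{2^k}),\varphi(V_1)$ included, is at Euclidean distance $\sqrt{h}$, and we are done. So the whole theorem reduces to exhibiting such a $\varphi$, equivalently, a basis of $(\mathbb{Z}/2\mathbb{Z})^k$ consisting of $k$ vectors of Hamming weight exactly $h$.

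To produce such a basis I would argue by induction on $k\ge h+1$, with the odd integer $h$ fixed. For the base case $k=h+1$, take the $h+1$ weight-$h$ vectors $v_i\coloneqq\mathbf 1\oplus e_i$ (all ones except a $0$ in position $i$) and verify their linear independence: if $\bigoplus_{i\in A}v_i=\mathbf 0$ for some nonempty $A\subseteq\{1,\dots,h+1\}$, then $\bigoplus_{i\in A}e_i$ equals $\mathbf 0$ when $|A|$ is even, which is impossible, or equals $\mathbf 1$ when $|A|$ is odd, forcing $A=\{1,\dots,h+1\}$ and hence $|A|=h+1$ odd, which is excluded precisely because $h$ is odd. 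For the inductive step, given a weight-$h$ basis $v_1,\dots,v_k$ of $(\mathbb{Z}/2\mathbb{Z})^k$, embed it into $(\mathbb{Z}/2\mathbb{Z})^{k+1}$ by appending a $0$ to each vector, so that $v_1,\dots,v_k$ now span the hyperplane $\{x_{k+1}=0\}$, and adjoin $w\coloneqq e_1\oplus e_2\oplus\cdots\oplus e_{h-1}\oplus e_{k+1}$, a vector of weight $h$ (its $h$ nonzero positions are distinct since $k\ge h+1>h-1$) whose last coordinate is $1$, hence $w$ lies outside that hyperplane; then $v_1,\dots,v_k,w$ is a weight-$h$ basis of $(\mathbb{Z}/2\mathbb{Z})^{k+1}$, completing the induction. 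Setting $\varphi(e_i)\coloneqq v_i$ then finishes the argument.

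The step I expect to be the main obstacle is the base case of this induction: the oddness of $h$ is exactly what keeps the natural candidates $\mathbf 1\oplus e_i$ from summing to zero, while the reduction to a linear problem, the inductive step, and the transport of the reflected binary code are all routine. I would also note that both hypotheses of Theorem~\ref{existence} are sharp and match the obstructions already in place: when $h$ is even every weight-$h$ vector lies in the $(k-1)$-dimensional even-weight subspace, so a weight-$h$ basis cannot exist (the linear-algebra shadow of Lemma~\ref{parity2}), and when $h\ge k$ no more than two vertices are mutually reachable (Lemma~\ref{noexistence}); combining Theorem~\ref{existence} with these yields the necessary and sufficient condition promised in the abstract.
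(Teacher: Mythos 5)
Your proposal is correct, and it takes a genuinely different route from the paper. The paper also seeds its construction with the reflected-binary-code wazir cycle, but it then works entirely at the level of the tour: it first complements every odd-indexed vertex of $P^1_C(2,k)$ to settle the case $h=k-1$ (for $k$ even), and then lifts a distance-$\sqrt{h}$ cycle from $C(2,k')$ to $C(2,k'+1)$ by the usual Gray-code doubling surgery (duplicate with a new coordinate, flip the leftmost $h-1$ coordinates of one copy, reverse it, and bridge the two halves), iterating until dimension $k$ is reached. You instead isolate a single linear-algebra lemma --- $(\mathbb{Z}/2\mathbb{Z})^k$ admits a basis of Hamming-weight-$h$ vectors whenever $h$ is odd and $h\le k-1$ --- and push the RBC cycle forward through the corresponding invertible linear map $\varphi$; linearity then makes every edge, the closing one included, have weight exactly $h$ with no further checking. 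It is worth noting that your base case $v_i=\mathbf 1\oplus e_i$ is precisely the paper's ``complement the odd vertices'' trick in disguise (that complementation is the linear map $x\mapsto x\oplus\bigl(\sum_i x_i\bigr)\mathbf 1$), whereas your inductive step (adjoin one new weight-$h$ vector outside the hyperplane $x_{k+1}=0$) replaces the paper's tour-level duplication-and-bridging. What your version buys is conceptual economy, a one-line verification of the closing edge, and a clean dual explanation of the parity obstruction (for even $h$ the weight-$h$ vectors all lie in the $(k-1)$-dimensional even-weight subspace, so no such basis exists); what the paper's version buys is an explicit coordinate-by-coordinate algorithm, illustrated by its worked $C(2,5)$, $\sqrt{3}$ example. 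Both arguments are complete and correct.
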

\begin{proof}
Since we aim to prove the existence of a Hamiltonian cycle in $C(2,k)$, by Lemmas~\ref{parity2} and \ref{noexistence}, the only configurations that we need to consider are those in which $h$ is an odd positive integer smaller than $k$. Using the described wazir tour algorithm, at change $1$, we find the polygonal chain $P^1_\textnormal{C}(2,k) \coloneqq v_0 \rightarrow v_1 \rightarrow \dots \rightarrow v_{2^k-1}$ that visits exactly once all the vertices of $C(2,k)$ and which is characterized by a unit distance between its endpoints. From Definition~\ref{change}, it follows that the Euclidean distance associated with the path mentioned above is $\sqrt{1}$ (i.e., $\sqrt{h}$ where $h \coloneqq 1$) and so, by changing only one coordinate at a time, we induce an alternation between even and odd vertices as we move from any vertex of $C(2,k)$ to the next one. Without loss of generality, in the sequence of vertices described by $P^1_\textnormal{C}(2,k)$, let $v_j$ be even if and only if $j$ is even (and vice versa if $j$ is odd).
As we want to modify the distance between every pair of consecutive vertices from the sequence above, we take all the odd vertices of $P^1_\textnormal{C}(2,k)$ and switch all their coordinates (i.e., $0 \mapsto 1$ and $1 \mapsto 0$).

This little trick performs a change of $k-1$ coordinates at each step, which yields a path in $\{0,1\}^k$ whose associated Euclidean distance is $\sqrt{k-1}$. When $k$ is odd, $k-1$ is even; hence a change of $k-1$ coordinates preserves parity. Starting from an even (respectively, odd) vertex, one can only reach even (respectively, odd) vertices, which precludes a Hamiltonian cycle. On the other hand, an even value of $k$ implies that $k-1$ is odd, so the operation of changing the coordinates of the odd vertices is equivalent to permuting the odd vertices over the entire sequence, preserving the fundamental property required to obtain a valid Hamiltonian cycle in $C(2,k)$. Then, let $h \coloneqq k-1$. We have a Hamiltonian cycle whose associated Euclidean distance is $\sqrt{h}$.

Now, let $k'$ be a positive integer smaller than $k$ and assume that the aforementioned algorithm has returned a Hamiltonian cycle in $C(2,k')$ with the associated distance $\sqrt{k'-1}$ (i.e., $\sqrt{h}$, where $h \coloneqq k'-1$). The extended algorithm (doubling-and-switching construction) will allow us to find a new Hamiltonian cycle in $C(2,k'+1)$ which is also characterized by an associated (Euclidean) distance of $\sqrt{h}$:
\begin{enumerate}
    \item We duplicate the Hamiltonian path in $C(2,k')$ (i.e., we take a Hamiltonian cycle in $\{0,1\}^{k'}$ and then we remove one of its edges) by adding a new coordinate at the right-hand side of each term of the original sequence of vertices so that we create a new pair of polygonal chains in $\mathbb{R}^{k'+1}$, $S_1$ (whose ($k'+1$)-th coordinate is $0$) and $S_2$ (whose ($k'+1$)-th coordinate is $1$). Although both $S_1$ and $S_2$ belong to $C(2,k'+1)$ and their union contains all the vertices of $\{0,1\}^{k'+1}$, they cannot yet be joined into a single (valid) Hamiltonian cycle. In fact, the distance between the first vertices of $S_1$ and $S_2$ (and likewise between their last vertices) is equal to $1$.
    \item Focusing on $S_2$, we carry out a change of the leftmost $h-1$ coordinates of each vertex and, accordingly, we call this new sequence of $2^{k'}$ vertices $S_3$. We now observe that the distance between the first vertices of $S_1$ and $S_3$ (and likewise between their last vertices) is $\sqrt{k'-1}$. Then, let $h \coloneqq k'-1$ and observe that in $S_3$ the associated Euclidean distance between consecutive vertices remains the same as in $S_2$, which is always equal to $\sqrt{h}$ (since $h=k'-1$).
    \item We reverse the order of the vertices of $S_3$ (i.e., let the last vertex of $S_3$ become the first one, and so forth). Let us call $S_4$ this new polygonal chain, and then observe that the transformation we have performed on $S_3$ does not affect the distance so that we can still get a Hamiltonian path in $C(2,k')$ which is associated with the distance $\sqrt{k'-1}$. 
    \item We connect the paths $S_1$ and $S_4$ by inserting a segment of Euclidean length $\sqrt{k'-1}=\sqrt{h}$ between the last vertex of $S_1$ and the first vertex of $S_4$. Indeed, the first vertex of $S_4$ is obtained from the last vertex of $S_1$ by changing exactly $h$ coordinates (namely, the $h-1$ leftmost coordinates and the rightmost one), and the same holds for the last vertex of $S_4$ and the first vertex of $S_1$. The resulting path can then be closed by joining its endpoints, yielding a Hamiltonian cycle in $C(2,k'+1)$ with associated Euclidean distance $\sqrt{h}$.
\end{enumerate}

By changing the value of $h$ for all odd integers smaller than $k$ and repeating the extended algorithm $k-k'$ times, we get a Hamiltonian cycle in $C(2,k)$ for all odd positive integers $h \in \{1, 2,\ldots, k-1\}$.
\end{proof}

We can use the algorithms mentioned above to find a Hamiltonian cycle in $C(2,5)$ with associated Euclidean distance $\sqrt{3}$.
In detail, we generate a wazir's tour in $C(2,4)$ by applying the recursive algorithm several times, as shown below.

Let us start from the closed wazir's tour
\begin{align*} 
P^1_\textnormal{C}(2,4)  & \coloneqq (0,0,0,0) \rightarrow (1,0,0,0) \rightarrow (1,1,0,0) \rightarrow (0,1,0,0) \\
& \rightarrow (0,1,1,0) \rightarrow (1,1,1,0) \rightarrow (1,0,1,0) \rightarrow (0,0,1,0) \\
& \rightarrow (0,0,1,1) \rightarrow (1,0,1,1) \rightarrow (1,1,1,1) \rightarrow (0,1,1,1) \\ 
& \rightarrow (0,1,0,1) \rightarrow  (1,1,0,1) \rightarrow (1,0,0,1) \rightarrow (0,0,0,1).
\end{align*}

\sloppy Then, we change all the coordinates of the odd vertices of $P^1_\textnormal{C}(2,4)$ to obtain the new Hamiltonian path $P^3_\textnormal{C}(2,4)$ with associated Euclidean distance equal to $\sqrt{4-1}$:
\begin{align*}
P^3_\textnormal{C}(2,4) & \coloneqq
(0,0,0,0) \rightarrow (0,1,1,1) \rightarrow (1,1,0,0) \rightarrow (1,0,1,1) \\ 
& \rightarrow (0,1,1,0) \rightarrow (0,0,0,1) \rightarrow (1,0,1,0) \rightarrow (1,1,0,1) \\
& \rightarrow (0,0,1,1) \rightarrow (0,1,0,0) \rightarrow (1,1,1,1) \rightarrow (1,0,0,0) \\
& \rightarrow (0,1,0,1) \rightarrow (0,0,1,0) \rightarrow (1,0,0,1) \rightarrow (1,1,1,0).
\end{align*}

Applying the doubling-and-switching construction to $P^3_\textnormal{C}(2,4)$, we get two new distinct Hamiltonian paths in $C(2,5)$ with associated Euclidean distance $\sqrt{3}$ (i.e., $S_1$ and $S_2$, see below):
\begin{align*}
S_1  & \coloneqq (0,0,0,0,0) \rightarrow (0,1,1,1,0)\rightarrow (1,1,0,0,0) \rightarrow (1,0,1,1,0) \\
&  \rightarrow (0,1,1,0,0) \rightarrow  (0,0,0,1,0) \rightarrow (1,0,1,0,0) \rightarrow (1,1,0,1,0) \\
& \rightarrow (0,0,1,1,0) \rightarrow (0,1,0,0,0) \rightarrow (1,1,1,1,0)\rightarrow  (1,0,0,0,0) \\
&  \rightarrow (0,1,0,1,0) \rightarrow (0,0,1,0,0) \rightarrow (1,0,0,1,0) \rightarrow (1,1,1,0,0);  \\[2mm]
S_2  & \coloneqq (0,0,0,0,1) \rightarrow (0,1,1,1,1)\rightarrow (1,1,0,0,1) \rightarrow (1,0,1,1,1) \\
& \rightarrow (0,1,1,0,1)\rightarrow  (0,0,0,1,1) \rightarrow (1,0,1,0,1) \rightarrow (1,1,0,1,1)\\
& \rightarrow (0,0,1,1,1) \rightarrow (0,1,0,0,1) \rightarrow (1,1,1,1,1)\rightarrow  (1,0,0,0,1) \\
& \rightarrow (0,1,0,1,1) \rightarrow (0,0,1,0,1)\rightarrow (1,0,0,1,1) \rightarrow (1,1,1,0,1).
\end{align*}

We change the $3-1$ leftmost coordinates of each vertex of $S_2$ (following the rule $0 \mapsto 1$ and $1 \mapsto 0$, as usual) to generate the new sequence
\begin{align*}
S_3 & \coloneqq (1,1,0,0,1) \rightarrow (1,0,1,1,1) \rightarrow (0,0,0,0,1)\rightarrow (0,1,1,1,1)  \\
& \rightarrow (1,0,1,0,1) \rightarrow  (1,1,0,1,1)\rightarrow (0,1,1,0,1) \rightarrow (0,0,0,1,1) \\
& \rightarrow (1,1,1,1,1)\rightarrow (1,0,0,0,1) \rightarrow (0,0,1,1,1) \rightarrow  (0,1,0,0,1) \\
& \rightarrow (1,0,0,1,1) \rightarrow (1,1,1,0,1) \rightarrow (0,1,0,1,1)\rightarrow (0,0,1,0,1).
\end{align*}

Now, let us reverse the order of the vertices of $S_3$; we start from its last vertex (i.e., $(0,0,1,0,1)$) and go backwards toward the first one (i.e., $(1,1,0,0,1)$) so that we create
\begin{align*}
S_4 & \coloneqq (0,0,1,0,1) \rightarrow (0,1,0,1,1)\rightarrow (1,1,1,0,1) \rightarrow (1,0,0,1,1) \\
& \rightarrow (0,1,0,0,1)\rightarrow  (0,0,1,1,1) \rightarrow (1,0,0,0,1) \rightarrow (1,1,1,1,1) \\
& \rightarrow (0,0,0,1,1) \rightarrow (0,1,1,0,1) \rightarrow (1,1,0,1,1)\rightarrow  (1,0,1,0,1) \\
&  \rightarrow (0,1,1,1,1) \rightarrow (0,0,0,0,1) \rightarrow (1,0,1,1,1) \rightarrow (1,1,0,0,1).
\end{align*}

Finally, we join the polygonal chains $S_1$ and $S_4$ through the oriented line segment $(1,1,1,0,0) \rightarrow (0,0,1,0,1)$ so that we obtain a Hamiltonian path in $C(2,5)$ with associated (Euclidean) distance $\sqrt{3}$:
\begin{align*}
P^3_\textnormal{C}(2,5) & \coloneqq (0,0,0,0,0) \rightarrow (0,1,1,1,0)\rightarrow (1,1,0,0,0) \rightarrow (1,0,1,1,0) \\
& \rightarrow (0,1,1,0,0)\rightarrow  (0,0,0,1,0) \rightarrow (1,0,1,0,0) \rightarrow (1,1,0,1,0) \\
& \rightarrow (0,0,1,1,0) \rightarrow (0,1,0,0,0) \rightarrow (1,1,1,1,0)\rightarrow  (1,0,0,0,0) \\
& \rightarrow (0,1,0,1,0) \rightarrow (0,0,1,0,0)\rightarrow (1,0,0,1,0) \rightarrow (1,1,1,0,0) \\
& \rightarrow (0,0,1,0,1)\rightarrow  (0,1,0,1,1) \rightarrow (1,1,1,0,1) \rightarrow (1,0,0,1,1) \\
& \rightarrow (0,1,0,0,1)\rightarrow (0,0,1,1,1) \rightarrow (1,0,0,0,1) \rightarrow  (1,1,1,1,1) \\
& \rightarrow (0,0,0,1,1) \rightarrow (0,1,1,0,1)\rightarrow (1,1,0,1,1) \rightarrow (1,0,1,0,1) \\
& \rightarrow (0,1,1,1,1) \rightarrow  (0,0,0,0,1) \rightarrow (1,0,1,1,1) \rightarrow (1,1,0,0,1).
\end{align*}

Lastly, $P^3_\textnormal{C}(2,5) \cup \{(1,1,0,0,1) \rightarrow (0,0,0,0,0)\}$ is a valid Hamiltonian cycle in $\{0,1\}^5$ whose associated Euclidean distance is $\sqrt{3}$.

From Theorem~\ref{existence} we can easily determine the existence of Hamiltonian cycles for every fairy chess leaper.
More specifically, since our candidates are all the $(a,b)$-leapers satisfying $a \geq 0$ and $b \geq a > 0$ (see Table~\ref{Tab1}), we can invoke Theorem~\ref{existence} to state a necessary and sufficient condition on the existence of Hamiltonian cycles in $C(2,k)$, where the associated Euclidean distance is mandatorily equal to the square root of $a^2+b^2$.
\begin{corollary} \label{NSC}
Let $a \geq 0$, $b \geq a > 0$, and $k > 1$ be positive integers. A necessary and sufficient condition for the existence of closed Euclidean $(a,b)$-leaper tours in $C(2,k)$ is provided by $a+b \equiv 1 \pmod 2$ and $k > a^2+b^2$.
\end{corollary}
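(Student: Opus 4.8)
The plan is to reduce Corollary~\ref{NSC} to the complete classification already obtained for Hamiltonian cycles at a prescribed Euclidean distance, and then translate the two resulting conditions into the arithmetic of $a$ and $b$. Set $h \coloneqq a^2+b^2$. First I would record the bridge between the fairy-chess language and the graph-theoretic language: inside $\{0,1\}^k$, by (\ref{norm}) the squared Euclidean distance between any two vertices equals the number of coordinates switched between them, so two vertices of $C(2,k)$ are at distance $\sqrt{a^2+b^2}$ precisely when they differ in exactly $a^2+b^2$ coordinates. In other words, on the Boolean cube the $(a,b)$-leaper's move is indistinguishable from an arbitrary ``change $a^2+b^2$'' in the sense of Definition~\ref{change}, and a closed Euclidean $(a,b)$-leaper tour in $C(2,k)$ is, verbatim, a Hamiltonian cycle in $C(2,k)$ whose associated Euclidean distance is $\sqrt{h}$.

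Next I would invoke the trichotomy proved above. For $k>1$ and any integer $h\ge 1$, Theorem~\ref{existence} together with Lemmas~\ref{parity2} and \ref{noexistence} yields: a Hamiltonian cycle in $C(2,k)$ with associated distance $\sqrt{h}$ exists if and only if $h$ is odd and $h<k$. Indeed, Theorem~\ref{existence} supplies the positive direction for every odd $h\in[1,k-1]$; Lemma~\ref{parity2} rules out every even $h<k$; and Lemma~\ref{noexistence} rules out every $h\ge k>1$. Since $a<b$ forces $b\ge 1$, we have $h=a^2+b^2\ge 1$, so the hypothesis $h\ge 1$ is automatic and the equivalence applies directly with this value of $h$.

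Finally I would translate the two conditions. Because $n^2\equiv n \pmod 2$ for every integer $n$, we get $a^2+b^2\equiv a+b\pmod 2$, so ``$h$ is odd'' is the same as ``$a+b\equiv 1\pmod 2$''; and, for integers, ``$h<k$'' is the same as ``$k>a^2+b^2$''. Substituting back, a closed Euclidean $(a,b)$-leaper tour exists in $C(2,k)$ if and only if $a+b\equiv 1\pmod 2$ and $k>a^2+b^2$, which is exactly the assertion of the corollary.

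I do not expect a genuine obstacle: Corollary~\ref{NSC} is essentially a dictionary restatement of Theorem~\ref{existence} and Lemmas~\ref{parity2} and \ref{noexistence}. The only step that deserves an explicit sentence is the first one, namely making clear that on $\{0,1\}^k$ the geometric $(a,b)$-move coincides with an arbitrary change of $a^2+b^2$ coordinates, since it is precisely this identification that licenses applying the general distance-$\sqrt{h}$ results to each named fairy-chess leaper.
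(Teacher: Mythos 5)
Your proposal is correct and follows essentially the same route as the paper's own proof: set $h \coloneqq a^2+b^2$, use the parity identity $a^2+b^2 \equiv a+b \pmod 2$ to translate the oddness condition, and combine Theorem~\ref{existence} with the non-existence results to obtain the equivalence $h$ odd and $h<k$. If anything, your version is slightly more complete, since you explicitly cite Lemma~\ref{noexistence} for the necessity of $k>a^2+b^2$ and spell out the identification of an $(a,b)$-leaper move with a change of $a^2+b^2$ coordinates, both of which the paper leaves largely implicit.
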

\begin{proof}
By Theorem~\ref{parity}, $a+b$ cannot be even.
Thus, $a+b$ is odd, and then the proof is trivial since $a+b \equiv 1 \pmod 2$ implies $a^2+b^2 \equiv 1 \pmod 2$.

Accordingly, let us assign $h \coloneqq a^2+b^2$ so that we can invoke Theorem~\ref{existence} to show that a Hamiltonian cycle in $C(2,k)$ exists for any $(a,b)$-leaper as above if and only if $1 \leq h \leq k-1$.

Therefore, $k$ is greater than $h$ and the statement of Corollary~\ref{NSC} follows.
\end{proof}

\section{Conclusion} \label{Concl}
For each integer $k > 1$ and each odd integer $h \in [1, k-1]$, Theorem~\ref{existence} gives a sufficient condition for the existence of Hamiltonian cycles in $C(2,k) \coloneqq \{0,1\}^k$ with associated Euclidean distance $\sqrt{h}$. Concretely, its proof describes the algorithm that generates a valid Hamiltonian cycle, as specified above, starting from a given wazir's tour \cite{6}.

Furthermore, assuming that $k$ is greater than $1$, Corollary~\ref{NSC} proves the existence of closed $(a,b)$-leaper tours in $C(2,k)$ if and only if $a+b$ is odd and $k > a^2+b^2$.

This corollary fills the gap left by Theorems~3.2 and 3.3 of~\cite{8} since it proves the existence of a threeleaper's tour in $C(2,10)$ and a zebra's tour in $C(2,14)$.

\section*{Acknowledgments}
The authors sincerely thank Nereus Duruemezuo for his constant support.

\makeatletter
\renewcommand{\@biblabel}[1]{[#1]\hfill}
\makeatother

\end{document}